\documentclass[12pt]{amsart}

\usepackage{amsmath,amssymb,amsthm,verbatim}
\usepackage{hyperref, color}

\newtheorem{theorem}[subsection]{Theorem}

\newtheorem{cor}[subsection]{Corollary}

\theoremstyle{definition}

\newtheorem{remark}[subsection]{Remark}

\newcommand{\haus}{\mathcal{H}}

\newcommand{\spt}{\mathrm{spt}}

\newcommand{\reg}{\mathrm{reg}}

\newcommand{\eps}{\epsilon}

\newcommand{\sing}{\mathrm{sing}}
\newcommand{\cF}{\mathcal{F}}
\newcommand{\R}{\mathbb{R}}
\newcommand{\bC}{\mathbf{C}}
\newcommand{\C}{\mathbb{C}}
\newcommand{\N}{\mathbb{N}}

\newcommand{\del}{\partial}

\newcommand{\CP}{\mathbb{CP}}
\newcommand{\vol}{\mathrm{vol}}

\usepackage{stmaryrd}
\newcommand{\res}{\mathbin{\hspace{0.1em}\vrule height 1.3ex depth 0pt width 0.13ex\vrule height 0.13ex depth 0pt width 1.0ex}} 

\title{Entire area-minimizing surfaces in $\R^4$ are algebraic}
\author{Nick Edelen}
\address{Department of Mathematics, University of Notre Dame, Notre Dame, IN 46556 USA}
\email{nedelen@nd.edu}
\author{Luis Atzin Franco Reyna}
\address{Department of Mathematics, University of Notre Dame, Notre Dame, IN 46556 USA}
\email{lfrancor@nd.edu}
\author{Paul Minter}
\address{Department of Mathematics, Stanford University, Building 380, Stanford, CA 94305, USA}
\email{pminter@stanford.edu}

\begin{document}

\begin{abstract}
We classify entire $2$-dimensional area-minimizing or stable surfaces in $\R^4$ with quadratic area growth as algebraic, cut out by a finite union of holomorphic polynomials whose collective degrees are controlled by the density at infinity.  As a consequence, we obtain bounds on the singular set size and genus in terms of the density at infinity.
\end{abstract}
\maketitle

It is well-known that any holomorphic subvariety of $\C^n$ is area-minimizing in its homology class, and of course any (real) rotation or reflection of the variety will remain area-minimizing in $\R^{2n}$. Micallef \cite{micallef} proved a partial converse to this statement by showing that any smoothly embedded (or properly immersed) minimal $M^2 \subset \R^4$ which is stable, oriented, connected, and with quadratic area growth must be holomorphic in $\C^2$, up to a possible rigid motion in $\R^4$.  Earlier,  Morgan \cite{morgan:planes} had shown that any union of $2$-planes in $\R^n$ which is area-minimizing must be holomorphic with respect to some fixed orthogonal complex structure on their span.


We observe here that any $M$ as considered by \cite{micallef} will in fact be algebraic in $\C^2$, cut out by a holomorphic polynomial of degree equal to the density of $M$ at infinity, and with resulting bounds on the genus.  Additionally, we extend the result in \cite{micallef} to any area-minimizing $2$-current $T$ in $\R^4$ with finite density $\Theta$ at infinity, to deduce that $T$ coincides with a sum of algebraic currents in $\C^2$, with a priori bounds on the degree, singular set, and genus in terms of $\Theta$.  The constituent algebraic currents of $T$ may be holomorphic with respect to different orthogonal complex structures, which slightly complicates the singular set bound, and though there is some compatibility between the complex structures it is an interesting question whether they all must coincide.

Algebraicity of $M$ is perhaps already known to some experts (as it can be deduced from holomorphicity and the area growth assumption from complex-analytic methods such as \cite[Theorem D]{stolzenberg}), but it is not mentioned in \cite{micallef} or anywhere else we are aware of.  We give a more minimal surface oriented proof, using the uniqueness of tangent cones at infinity due to Rivière \cite{riviere} to show that $\overline{M}$ extends to an analytic subvariety of $\CP^2$, and then invoking the principle of ``GAGA.''

\vspace{3mm}

Given a stationary $2$-current $T$ in $\R^4$, write $$\Theta_T(\infty) := \lim_{r \to \infty} \frac{\|T\|(B_r(0))}{\pi r^2}$$ for the density at infinity.  It follows by the monotonicity formula that $\Theta_T(\infty)$ always exists (although a priori it could be infinite).  Write $\reg\, T \subset \spt\, T$ for the set of points near which $\spt\, T$ coincides with a smooth, embedded submanifold, and write $\sing\, T = \spt\, T \setminus \reg\, T$.  For a subset $A \subset \R^4$ write $\overline{A}$ for its closure.

If $M$ is an immersed or embedded oriented $2$-surface in $\R^4$, we can define $\Theta_M$, $\reg\, M$, $\sing\, M$ in terms of its associated $2$-current $\llbracket M \rrbracket$, so that e.g.~$\|M\|(A) \equiv \haus^2(M \cap A)$ and $\sing\, M$ consists of the points in $\overline{M}$ about which $\overline{M}$ does not locally coincide with a smooth embedded submanifold.

Our first theorem is as follows.

\begin{theorem}\label{thm:M}
Let $F:M^2\to \R^4\equiv \C^2$ be a connected orientable smooth stable minimal immersion without boundary, which:
\begin{enumerate}
    \item [\textnormal{(i)}] is proper away from a discrete set $X\subset\R^4$, i.e.~$F^{-1}(K)$ is compact for every compact $K\subset\R^4\setminus X$;
    \item [\textnormal{(ii)}] has quadratic area growth with respect to ambient balls, i.e.~$\Theta:=\Theta_{F(M)}(\infty)<\infty$.\footnote{Our hypotheses imply that $F(M)$ defines a stationary integral varifold in all of $\R^4$, so by monotonicity the limit $\Theta_{F(M)}(\infty)$ 
    will exist in $[1,\infty]$ (cf. Remark \ref{rem:M-to-V}).}
\end{enumerate}
Then, we can write
\begin{equation}\label{eqn:M-concl1}
\overline{F(M)} = A\{(z,w)\in \C^2:p(z,w) = 0\}
\end{equation}
for some $A \in O(4)$, and $p:\C^2\to \C$ a holomorphic polynomial of degree $\Theta$. In particular, $F(M)$ is smooth and properly embedded away from a finite singular set, with bounds:
\begin{equation}\label{eqn:M-concl2}
\#\sing\, F(M) + \mathrm{genus}(\reg\, F(M)) \leq \frac{1}{2} (\Theta - 1)(\Theta - 2).
\end{equation}
\end{theorem}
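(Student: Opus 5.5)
The plan has four steps: reduce to Micallef's holomorphicity theorem, use Rivière's uniqueness of tangent cones at infinity to compactify in $\CP^2$, apply Chow/GAGA to get a polynomial, and read off degree, singular set and genus from the resulting plane curve.

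\textbf{Step 1: holomorphicity.} First, by the footnote, $F(M)$ defines a stationary integral varifold $V$ in all of $\R^4$. The discrete set $X$ has zero $2$-capacity, and the area bounds near $X$ are local, so the first variation extends across $X$. Stability of the immersion together with quadratic area growth puts us in the setting of Micallef \cite{micallef}. Stability is a local condition on $M$, and the log cutoff trick lets us ignore the points of $F^{-1}(X)$. Micallef then gives $A\in O(4)$ such that $A^{-1}F$ is holomorphic in $\C^2$. In particular $\Sigma:=A^{-1}\overline{F(M)}$ is a closed set which, away from $X$, is the image of a proper holomorphic immersion, hence a one-dimensional analytic subvariety of $\C^2\setminus X$. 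Remmert--Stein (or Bishop's theorem, using the local area bounds) extends it across the discrete set $X$. So $\Sigma$ is a pure one-dimensional analytic subvariety of $\C^2$. It is irreducible because $M$ is connected, and its total area density at infinity is $\Theta$.

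\textbf{Step 2: extension to $\CP^2$.} Next I show that $\overline{\Sigma}\subset\CP^2$ is analytic near the line at infinity $L_\infty$. I would use Rivière's uniqueness of tangent cones at infinity \cite{riviere}, applied to the area-minimizing (being holomorphic) current $[\Sigma]$. The blow-downs $\lambda\Sigma$ converge, as $\lambda\to0$, to a unique cone. This cone is a union of complex lines through $0$ with multiplicities summing to $\Theta$, so it meets $L_\infty$ in finitely many points $q_1,\dots,q_k$.

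Uniqueness of the cone shows that $\overline{\Sigma}\cap L_\infty=\{q_1,\dots,q_k\}$. Hence $\overline{\Sigma}$ is an analytic set in $\CP^2\setminus\{q_i\}$, finitely many points, with locally finite area. Area finiteness near each $q_i$ follows from the quadratic growth: in inhomogeneous coordinates near $q_i$, the Fubini--Study area of $\Sigma\setminus B_R$ is comparable to $\int_R^\infty r^{-4}\,d\|\Sigma\|(r)<\infty$. Bishop's (or Remmert--Stein's) extension theorem then makes $\overline{\Sigma}$ an analytic subvariety of $\CP^2$. This is the main obstacle. One must check carefully that the closure adds only finitely many points and that the area bound holds in the Fubini--Study metric, so that Bishop's theorem applies. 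Uniqueness of the tangent cone is precisely what rules out the closure accumulating on a continuum in $L_\infty$.

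\textbf{Step 3: algebraicity and degree.} By Chow's theorem (GAGA), $\overline{\Sigma}=\{P=0\}$ for a homogeneous polynomial $P$, which we may take reduced. Dehomogenizing gives $p$ with $\Sigma=\{p=0\}$. For the degree, a generic line through the origin meets $\Sigma$ in $\deg p$ points counted with multiplicity. Alternatively, the density at infinity of an affine curve equals its degree, because the tangent cone at infinity is the set of lines cut out by the top-degree part $p_d$, counted with multiplicity. Either way $\deg p=\Theta$; when multiplicities are counted, $p$ may be a power of an irreducible polynomial to match the current.

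\textbf{Step 4: consequences.} Since $M$ is connected, the image is a single irreducible curve, taken with some multiplicity. The singular set bound comes from the genus formula for the plane curve $\overline{\Sigma}$ of degree $d\le\Theta$. For a reduced irreducible curve, $g(\tilde C)+\sum_P\delta_P=\tfrac12(d-1)(d-2)$, and each singular point has $\delta_P\ge1$. Singular points at infinity only help, and the genus of $\reg\,\Sigma$ is at most the genus of the normalization. This gives \eqref{eqn:M-concl2}, and finiteness of the singular set is immediate from algebraicity.
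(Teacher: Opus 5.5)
Your proposal is correct and follows the paper's overall architecture. The shared steps are: Micallef's holomorphicity adapted to the discrete set $X$; Rivi\`ere's uniqueness of the tangent cone at infinity (with Hausdorff convergence of supports), showing that $\overline{\Sigma}$ meets the line at infinity only in the finitely many points $(\cup_i P_i)\cap\CP^1$; a removable-singularity theorem plus Chow; and the genus--degree formula. Your $\delta$-invariant count is exactly what the paper's citation of Hartshorne gives.

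Where you genuinely differ is the proof that $\deg p=\Theta$. The paper argues homologically. It uses $\deg(P)\,\pi=\int_M\omega_{FS}$ and $\int_{\bC}\omega_{FS}=\Theta\pi$, and compares the two via the decomposition $(\llbracket M\rrbracket-\bC)\res B_{R_i}=\del E_i+S_i$ that comes from Rivi\`ere's flat convergence. This works because $\omega_{FS}$ is closed and $|\omega_{FS}|\le cR^{-2}$ on $\del B_R$. You instead read the cone at infinity off the leading form. Since $\lambda^d p(\cdot/\lambda)\to p_d$ locally uniformly, the zero currents converge to $\sum_i n_i\llbracket\{a_iz+b_iw=0\}\rrbracket$ with $\sum_i n_i=d$. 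This follows from Hurwitz on transversal discs, or from Poincar\'e--Lelong with $L^1_{loc}$ convergence of $\log|\lambda^d p(\cdot/\lambda)|$. Masses converge by positivity, so $\Theta=d$ for reduced $p$. This is the ``more hands-on analysis'' the paper mentions in parentheses. It is more elementary and needs no minimal-surface input for the degree. The paper's version instead stays inside the flat-norm framework it already has from Rivi\`ere. You should write the blow-down argument out rather than assert it. Also drop the ``generic line through the origin'' version, which by itself does not connect $\deg p$ to $\Theta$.

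Two smaller corrections.

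First, your Fubini--Study estimate is wrong as stated. On a complex tangent line at $x$, the density of $\omega_{FS}$ is $(1+|x^\perp|^2)/(1+|x|^2)^2$, which is of order $|x|^{-2}$ wherever $\Sigma$ is far from radial. Finiteness still holds, by the monotonicity identity $\int_{\Sigma\setminus B_R}|x^\perp|^2|x|^{-4}\,d\haus^2=\pi(\Theta-\Theta_\Sigma(0,R))$. With Remmert--Stein, as in the paper, no area bound is needed at all. On the other hand, once you have finite Fubini--Study area, Bishop's theorem applies across the whole line at infinity, so uniqueness of the tangent cone is not needed for algebraicity.

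Second, in Step 1 the adaptation of Micallef needs two things you do not mention, both of which the paper supplies:
\begin{itemize}
\item Since $M$ is not complete, Fischer-Colbrie's positive solution must be constructed on exhaustions avoiding $X$.
\item The log-cutoff proof of parabolicity needs the a.e.\ bound $\#F^{-1}(F(x))\le k$, obtained from analytic continuation and properness, to transfer the area bound on the image $F(M)$ to $M$.
\end{itemize}
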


\begin{remark}
As is well-known, holomorphicity implies the $2$-current $\llbracket F(M) \rrbracket$ is area-minimizing, and also implies that $M$ can only be immersed at isolated points.  We remark that a pair of planes meeting along a line is a stable minimal surface, but disconnected as an immersion, so Theorem \ref{thm:M} (or Micallef's holomorphicity result \cite{micallef}) would only apply to each connected component of the immersion.
\end{remark}


\begin{remark}
In contrast to $\R^{2n}\equiv\C^n$, stable minimal surfaces in $\CP^n$ are vastly more rigid: Lawson--Simons \cite{lawson-simons} proved that any integral $p$-current in $\CP^n$ is stable if and only if it is a complex-algebraic cycle.
\end{remark}

\begin{remark}
We highlight Theorem \ref{thm:M} applies to $M^2 \subset \R^4$ that are connected, oriented, smoothly embedded, stable minimal $2$-surfaces which are complete away from a discrete set (i.e.~$\overline{M} \setminus M$ is discrete in $\R^4$)\footnote{Strictly speaking, for a general submanifold being complete is not the same as being set-theoretically closed.  However, since $M$ has finite area in any ambient ball it follows by e.g.~the curvature estimates of \cite{choi-schoen} that $\overline{M} = M$ in the region where $M$ is complete.} and have quadratic area growth (i.e.~$\Theta_M(\infty) < \infty$).
\end{remark}

Our second main theorem is the following.
\begin{theorem}\label{thm:main}
Let $T$ be an area-minimizing integral $2$-current in $\R^4\equiv \C^2$ with $\del T = 0$ and $\Theta:=\Theta_T(\infty)<\infty$. Then, there exist positive integers $m_1,\dotsc,m_k$, holomorphic polynomials $\{p_i:\C^2\to\C\}_{i=1}^k$ with $d_i:=\deg(p_i)\geq 1$ satisfying $m_1 d_1 + \cdots + m_k d_k = \Theta$, $\ell \in \{ 1, 2, \ldots, k\}$, complex rigid motions $\{ B_i \in U(2) \}_{i=\ell+1}^k$, and a real rigid motion $A \in O(4)$ so that
\begin{align}
T &= A_\# \sum_{i=1}^\ell m_i \llbracket \{(z, w)\in \C^2 : p_i(z, w) = 0 \} \rrbracket \label{eqn:main-concl1} \\
&\qquad\qquad + A_\# \sum_{i=\ell+1}^k m_i {(B_i R)}_\# \llbracket \{ (z, w)\in \C^2 : p_i(z, w) = 0 \}\rrbracket . \nonumber
\end{align}
Here $R(z, w) = (z, \overline{w})$.  Either $d_i = 1$ for every $i$ in which case we can assume $\ell = k$; or we can assume at least one degree $\{ d_i : i \leq \ell \}$ is at least $2$, every degree $\{ d_i : i \geq \ell+1\}$ is at least $2$, and 
\[
p_i(z, w) = w^{d_i} + \textnormal{(lower order terms in $z, w$)} \quad \forall i \geq \ell+1.
\]

Additionally, we have the bounds
\begin{align}\label{eqn:main-concl2}
&\# \sing\, T  \leq \Theta^3, 
\end{align}
and
\begin{equation}\label{eqn:main-concl3}
\textnormal{genus}(\reg\,T) \leq \frac{1}{2}\sum_i(d_i-1)(d_i-2) \leq \frac{\Theta^2}{2}
\end{equation}
\end{theorem}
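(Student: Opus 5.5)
The plan is to reduce Theorem \ref{thm:main} to Theorem \ref{thm:M} applied to the irreducible pieces of $T$, and then to recover a common complex structure from the tangent cone at infinity.

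\textbf{Step 1: decomposition.} By Chang's regularity theory for area-minimizing $2$-currents, $\sing\,T$ is discrete and $\reg\,T$ is a smooth minimal surface carrying locally constant integer multiplicity. We split $\reg\,T$ into connected components $M_j$ with multiplicities $m_j$. Each closure $\overline{M_j}$ is a stationary integral current with $\Theta_{M_j}(\infty)\le\Theta$, so monotonicity gives density at least $1$ for each. Hence there are at most $\Theta$ components, and after grouping, $T=\sum_j m_j\llbracket M_j\rrbracket$ with $\sum m_j\Theta_{M_j}(\infty)=\Theta$. Each $M_j$ is orientable, since $T$ orients it. It is stable, being a piece of a minimizer, and it is proper away from the discrete set $\sing\,T$. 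Passing to the normalization at branch points yields a connected immersion satisfying the hypotheses of Theorem \ref{thm:M}. Therefore $\overline{M_j}=A_j\{p_j=0\}$ with $\deg p_j=\Theta_{M_j}(\infty)=:d_j$, and $\sum m_jd_j=\Theta$. Each $p_j$ is irreducible because $M_j$ is connected.

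\textbf{Step 2: compatibility of the complex structures.} This is the main obstacle. Each $A_j$ determines an orthogonal complex structure $J_j$ on $\R^4$, defined up to the sign choices coming from orientation. We use Rivière's uniqueness of tangent cones at infinity. The tangent cone of $T$ at infinity is a sum of the tangent cones of the pieces, and the tangent cone of $\{p_j=0\}$ is the union of $J_j$-complex lines given by the top-degree part of $p_j$. Since $T$ is minimizing, its tangent cone at infinity is an area-minimizing union of planes. By Morgan's theorem \cite{morgan:planes}, that union is holomorphic for a single orthogonal complex structure $J$, after choosing $A$.

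If $d_j\ge2$ and the top-degree part of $p_j$ has two distinct linear factors, the component's cone contains at least two distinct planes that are $J_j$-complex and also $J$-complex. Two transverse planes in $\R^4$ that are both complex pin down the complex structure up to the conjugation ambiguity $R$ on one factor. I would verify this by direct linear algebra on the Grassmannian, where the space of $J$ making a given plane complex is a $2$-sphere. The outcome is that $J_j=J$, or $J_j$ is $J$ twisted by $R$ in a unitary frame. This produces the $B_iR$ in \eqref{eqn:main-concl1}. Choosing coordinates so that the cone direction is $\{w=0\}$-type gives the normalization $p_i=w^{d_i}+\dots$. If $d_j=1$, the piece is a plane lying in the $J$-holomorphic cone, so it is $J$-complex and we may put it in the first sum. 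When every $d_i=1$, the whole of $T$ is a union of planes and Morgan's theorem gives $\ell=k$ directly. The case where the top-degree part is a single repeated line (cone equal to a single plane with multiplicity) needs separate care. I expect to handle it by the same $S^2$-family argument, which places that piece in the $R$-twisted family with leading term $w^{d_i}$.

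\textbf{Step 3: bounds.} The genus bound \eqref{eqn:main-concl3} follows by applying \eqref{eqn:M-concl2} to each piece and summing, using $\sum(d_i-1)(d_i-2)\le(\sum d_i)^2\le\Theta^2$. The singular set consists of three kinds of points. The first kind is the interior singularities of each curve, numbering at most $\tfrac12(d_i-1)(d_i-2)$. The second kind is the intersections of two pieces with the same complex structure, bounded by $d_id_j$ via Bézout. The third kind is the intersections of a $J$-curve with an $R$-twisted curve. For these, I would write $B_iR\{p_i=0\}\cap\{p_j=0\}$ as the zero set of two real polynomial systems of degrees $d_i,d_j$ in $\R^4$. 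Isolatedness, which follows from regularity, together with a Milnor–Thom/Bézout-type count, then gives at most roughly $(2d_i)(2d_j)$ points, or a crude polynomial bound in $d_id_j$. Summing over all pairs yields $\#\sing\,T\le\Theta^3$.
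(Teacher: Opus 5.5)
Your Step 1 and overall architecture match the paper: Chang's regularity, Theorem \ref{thm:M} applied to each component of $\reg\,T$, Rivi\`ere's unique cone at infinity, and Morgan's theorem. However, the bound $\#\sing\,T\le\Theta^3$ is not established.

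For intersections of pieces holomorphic for different complex structures you propose a real-algebraic count. But $B_iR\{p_i=0\}\cap\{p_j=0\}$ is cut out by four real equations of degrees $d_i,d_i,d_j,d_j$ in $\R^4$. Real B\'ezout or Milnor--Thom then gives at best something of order $d_i^2d_j^2$ (Milnor--Thom gives $D(2D-1)^3$ with $D=\max(d_i,d_j)$), not $(2d_i)(2d_j)$. With one untwisted and one twisted piece of degree $\Theta/2$ this is already $\Theta^4/16$, which exceeds $\Theta^3$ once $\Theta>16$. Your case list also omits pairs of twisted pieces with different $B_i$, i.e.\ pieces holomorphic for two distinct orientation-reversing structures.

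The missing idea is to use minimality of $T$ at each intersection point $x$. The tangent cone of $T_i+T_j$ at $x$ is an area-minimizing union of planes, so by Morgan's theorem all its planes are positively holomorphic for a single orthogonal $J'$.
\begin{enumerate}
\item For two distinct orientation-reversing structures, no plane is positively holomorphic for both. This forces $J'$ to be orientation-preserving and each of the two tangent cones at $x$ to be a single plane, the two planes meeting transversally with positive intersection. After perturbing at branch points, intersection theory plus a boundary-fixing homotopy back to the complex curves $B_k\{p_k=0\}$ gives at most $d_id_j$ points by complex B\'ezout.
\item For a $J$-piece against an $R_\#J$-piece, $\{w=0\}$ is the only plane positively holomorphic for both structures, and Morgan forces one of the two tangent cones at $x$ to lie in $\{w=0\}$. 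Hence, in coordinates where the second piece is $R\{p_j=0\}$, $x\in\{p_i=\partial_zp_i=0\}\cup R\{p_j=\partial_zp_j=0\}$. Complex B\'ezout then gives at most $d_i(d_i-1)+d_j(d_j-1)$ points.
\end{enumerate}
Only with these per-pair bounds does the sum come out as $\le k\sum_i d_i^2\le\Theta^3$.

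Step 2 also rests on incorrect linear algebra, which scrambles the case split. For a fixed oriented plane $P\subset\R^4$ there are exactly two orthogonal complex structures making $P$ a positively oriented complex line, not a $2$-sphere of them. They are one of each orientation type, related by conjugation with a reflection of $P^\perp$ (e.g.\ $J$ and $R_\#J$ for $P=\{w=0\}$). The $2$-spheres are the complex lines of a fixed $J$, or the structures of a fixed orientation type.

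Consequently, suppose the cone at infinity of a piece contains two distinct (hence transverse) planes, each positively holomorphic for both $J$ and $J_j$. Then $J_j=J$ is forced and there is no $R$-ambiguity. The twist $B_iR$ can only occur when that cone is a single plane with multiplicity, which is precisely the case you defer. That case is what yields $p_i=w^{d_i}+\cdots$, and even then the piece may still be $J$-holomorphic. As written you attach the $R$-twist to the two-plane case, where the normalization $p_i=w^{d_i}+\cdots$ is impossible. This part can be repaired along the lines of the paper, but the singular-set bound needs the new ingredient above.
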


\begin{remark}
A slightly weaker formulation of \eqref{eqn:main-concl1} could simply be
\[
T = A_\# \sum^k_{i=1}m_i {(A_i)}_\# \llbracket \{(z,w)\in \C^2:p_i(z,w)=0\}\rrbracket
\]
for suitable $A_i \in O(4)$, but \eqref{eqn:main-concl1} captures some compatibility of the complex structures between the different $T_i := m_i {(A_i)}_\# \llbracket \{(z,w)\in \C^2:p_i(z,w)=0\}\rrbracket$, which is enforced by the condition that any tangent cone of $T$ must consist of planes all positively oriented and holomorphic with respect to the same orthogonal complex structure (\cite{morgan:planes}).

In particular, \eqref{eqn:main-concl1} says that each $T_i$ is either positively holomorphic with respect to the standard complex structure $J$, or there is a (positively oriented) complex plane $P_i \equiv B_i \{ w = 0 \}$ so that $T_i$ is positively holomorphic with respect to $(R_i)_\# J$ for some real reflection $R_i$ in $P_i^\perp$.  This second case is somewhat restrictive: it can only occur if the tangent cone of $T_i$ at infinity is the plane $m_i d_i \llbracket P_i \rrbracket$, and moreover given any $j \leq \ell < i$ then $\spt\, T_i$, $\spt\, T_j$ can only intersect at points $x$ where the tangent cone of either $T_i$ or $T_j$ at $x$ is supported in $P_i$.

It may be that all the $T_i$ are holomorphic with respect to the same orthogonal complex structure (i.e.~$\ell = k$ in \eqref{eqn:main-concl1}), so that $T$ will simply be a positive $(1, 1)$-current up to a real rigid motion, and which would also yield the (possibly sharp) bound \begin{equation}\label{eqn:rem-sharp}
\#\sing\,T+\textnormal{genus}(\reg\,T) \leq \frac{1}{2}\sum_{i\neq j}d_id_j + \frac{1}{2}\sum_i(d_i-1)(d_i-2) \leq \frac{\Theta^2}{2}.
\end{equation}
If $\Theta_T(\infty) \leq 3$ this is true because of the restrictions Theorem \ref{thm:main} puts on the degrees $d_i$.  However, it's not immediately clear to us why, e.g.~a current of the form
\[
\llbracket \{ w^2 = z \} \rrbracket + R_\# \llbracket \{ w^2 = z + 1 \} \rrbracket
\]
could not be area-minimizing (note the tangent cone at infinity of this current is $4\llbracket \{ w = 0 \} \rrbracket$).
\end{remark}

\begin{remark}
The estimate \eqref{eqn:main-concl2} is not optimal.  We suspect $\# \sing T$ should be bounded quadatrically in terms of the density $\Theta_T(\infty)$, and it's possible that \eqref{eqn:rem-sharp} is the sharp estimate even when $\ell < k$.  The cubic nature of our bound \eqref{eqn:main-concl2} arises from the intersections between $T_i$, $T_j$ for $i \leq \ell < j$, but these feel highly non-generic (as any such intersection must be the zero set of 6 real polynomial equations).
\end{remark}

\begin{remark}
The assumption $\Theta_T(\infty)<\infty$ is obviously necessary, as the graph of any holomorphic function $f:\C\to \C$ is area-minimizing in $\C^2$, but need not have quadratic volume growth (e.g. $f(z) = e^z$).
\end{remark}

For low densities like $\Theta_T(\infty) = 2, 3$, Theorem \ref{thm:main} gives a more rigid characterization of area-minimizers.  For example, we have:
\begin{cor}\label{cor:main}
Let $T$ be an area-minimizing integral $2$-current in $\R^4\equiv \C^2$ with $\del T = 0$ and $\Theta_T(\infty)=2$. Then for some $A \in O(4)$, and some $a, b, c, d, e, f, \in \C$, $T$ is one of two possibilities:
\begin{enumerate}
    \item [\textnormal{(i)}] the sum of two (possibly coincident) positively oriented, complex affine planes:
    \begin{align*}
    T &= A_\#\llbracket \{(z, w) \in \C^2 : az + bw = c \rrbracket \\
    &\quad \quad + A_\#\llbracket (z, w) \in \C^2 : d z + ew = f \rrbracket;
    \end{align*}
    \item [\textnormal{(ii)}] entirely regular, multiplicity one, genus zero, and the zero set of a quadratic holomorphic polynomial:
    \begin{align*}
    T = A_\# \llbracket (z, w) \in \C^2 : az^2 + bzw + cw^2 + dz + ew + f = 0 \rrbracket .
    \end{align*}
\end{enumerate}
\end{cor}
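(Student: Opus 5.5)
We derive Corollary \ref{cor:main} from Theorem \ref{thm:main} with $\Theta = 2$. The constraint $m_1 d_1 + \cdots + m_k d_k = 2$ with all $m_i, d_i \geq 1$ leaves three possibilities:
\begin{enumerate}
    \item[(a)] $k = 2$ with $m_1 = m_2 = d_1 = d_2 = 1$;
    \item[(b)] $k = 1$ with $m_1 = 2$, $d_1 = 1$;
    \item[(c)] $k = 1$ with $m_1 = 1$, $d_1 = 2$.
\end{enumerate}

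In cases (a) and (b) every degree equals $1$. Theorem \ref{thm:main} then lets us take $\ell = k$, so no $B_i R$ terms occur. Thus $T = A_\#(\llbracket \{p_1 = 0\}\rrbracket + \llbracket\{p_2=0\}\rrbracket)$ with $p_1, p_2$ affine-linear, where in case (b) $p_2 = p_1$. This is alternative (i), with the planes possibly coincident. The positive orientation comes from the convention in \eqref{eqn:main-concl1} that $\llbracket \{p = 0\}\rrbracket$ carries the complex orientation.

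In case (c), $k = 1$ forces $\ell = 1$, since $\ell \in \{1,\dots,k\}$. Hence $T = A_\#\llbracket \{p = 0\}\rrbracket$ with $p$ of degree exactly $2$, which has the claimed form $az^2 + bzw + cw^2 + dz + ew + f$. Since $m_1 = 1$, the current has multiplicity one. The genus bound \eqref{eqn:main-concl3} gives $\mathrm{genus}(\reg\, T) \leq \frac12 (2-1)(2-2) = 0$.

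It remains to show $T$ is entirely regular in case (c), and this is the main point needing care. We argue directly from the algebra. Suppose $\{p = 0\}$ had a singular point $x$. Then $p(x) = 0$ and $\nabla p(x) = 0$. Expanding $p$ about $x$ leaves only its homogeneous quadratic part $q$. Over $\C$, $q$ factors as a product of two linear forms, so $\{p = 0\}$ is a union of two complex lines through $x$ (a double line if the forms are proportional). Either way $p$ is reducible. A reducible $p$ is not the integral current $\llbracket \{p = 0\}\rrbracket$ with multiplicity one and $d_1 = 2$ irreducible: its zero set would equal $\{\lambda_1 = 0\}\cup\{\lambda_2 = 0\}$ and the current would be of type (i). So either $T$ is regular, or $T$ already falls under alternative (i).

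To make case (c) clean, we assume (or re-choose) $p$ to be irreducible whenever $T$ is not of type (i). This is harmless, because a reducible degree-$2$ polynomial yields exactly a configuration in (i). The only subtlety is the degenerate square $p = \lambda^2$. There, interpreting $\llbracket\{p=0\}\rrbracket$ with the multiplicity built into Theorem \ref{thm:main} would give $2\llbracket\{\lambda = 0\}\rrbracket$, which again is (i) with coincident planes. With that normalization, case (c) gives an irreducible conic, which is smooth by the argument above. Its regular part is therefore all of $\spt\, T$, with genus zero by the bound \eqref{eqn:main-concl3}, completing alternative (ii).
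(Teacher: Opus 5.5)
Your argument is correct, but it reaches regularity by a different route than the paper. The paper gets regularity and multiplicity one \emph{before} using the decomposition. If $T$ is not a cone about some point, equality in monotonicity fails, so $\Theta_T(x) < \Theta_T(\infty) = 2$. Integrality of densities then forces $\Theta_T(x) = 1$ on $\spt\, T$, and Allard's theorem makes $T$ a smooth multiplicity-one surface. The conical case is a union of planes, hence type (i). Theorem \ref{thm:main} is then used only to read off the algebraic form: two disjoint lines, or one smooth conic.

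You instead work entirely downstream of Theorem \ref{thm:main}. You enumerate the solutions of $\sum m_i d_i = 2$ and use the elementary fact that a degree-$2$ curve singular at $x$ satisfies $p(x+v) = q(v)$ for a nonzero binary quadratic form $q$, and is therefore a pair of lines. Your route avoids Allard and monotonicity rigidity. In exchange, it relies on the normalizations in Theorem \ref{thm:main} ($\ell = k$ when all $d_i = 1$, and $\ell = 1$ when $k = 1$) plus a small algebraic lemma. The paper's route gets regularity purely from geometric measure theory and needs no algebra.

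One cleanup: your handling of $p = \lambda^2$ is loose. With $m_1 = 1$, $\llbracket\{p=0\}\rrbracket$ would be a multiplicity-one line of density $1$, contradicting $\Theta_T(\infty) = 2$, so this case simply cannot occur. More cleanly, each $p_i$ in Theorem \ref{thm:main} comes from Theorem \ref{thm:M} applied to a connected component $M_i$ of $\reg\, T$, and can be taken irreducible. Case (c) is then directly an irreducible, hence smooth, conic.
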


We thank Jeff Diller, Christos Mantoulidis, Davide Parise, Nick Salter, and Eric Riedl for helpful conversations, and Brian White for his interest and encouragement. N.E. was supported by NSF grant DMS-2506700 and a Simons Foundation travel award. This research was conducted during the period P.M. served as a Clay Research Fellow.

\section{Proof of Main Result}

We identify $\R^4 \equiv \C^2 \equiv \{ (z, w) : z, w \in \C \}$.  We will typically write $J$ for the standard complex structure on $\C^2$, and given $A \in O(4)$ write $A_\#J \equiv A J A^{-1}$ for the orthogonal complex structure induced by $A$.  Given a subset $A \subseteq \C^2$ (or $\CP^2$), $\overline{A}$ will always denote the set-theoretic closure.  Write $\haus^k$ for the $k$-dimensional Hausdorff measure. We will use the language of integral currents and varifolds, and refer the reader to e.g.~\cite{simon:gmt} for background and notation. We write $\cF$ for the flat norm of a compactly supported current. Given an immersed $2$-surface $M$, we write $|M|$ for the induced varifold, and (if $M$ is oriented) $\llbracket M\rrbracket$ for the induced current, so that $\|M\|(A) \equiv \haus^2(M \cap A)$.

Given a $2$-current $T$, write
\[
\Theta_T(x, r) = \frac{\|T\|(B_r(x))}{\pi r^2}
\]
for the density of $T$ in $B_r(x)$.  If $T$ is stationary, the monotonicity formula asserts that $\Theta_T(x, r)$ is increasing in $r$, and so the densities $\Theta_T(x) := \lim_{r \to 0} \Theta_T(x, r)$ and $\Theta_T(\infty) := \lim_{r \to \infty} \Theta_T(0, r)$ always exist (the latter possibly being $+\infty$).

Recall the well-known fact that any conical area-minimizing integral $2$-current in $\R^m$ (of finite mass) consists of a finite union of $2$-planes (possibly with multiplicity) which are disjoint away from $0$.  In particular, if $T$ is an area-minimizing integral $2$-current, then $\Theta_T(x) \in \N$ for every $x\in \spt\,T$, and $\Theta_T(\infty) \in \N\cup\{\infty\}$.

The first result we need is uniqueness of tangent cones as infinity.

\begin{theorem}[Uniqueness of tangent cones at infinity, \cite{riviere}]\label{thm:unique}
Let $T^2$ be an area-minimizing integral $2$-current in $\R^{2+m}$ with $\del T = 0$ and $\Theta := \Theta_T(\infty) < \infty$. Then, there are oriented $2$-planes $P_1,\dotsc,P_\ell$ which are disjoint away from $0$ and multiplicities $n_1,\dotsc,n_\ell\in\N$ such that the cone $\bC = n_1\llbracket P_1\rrbracket + \cdots + n_\ell\llbracket P_\ell\rrbracket$ is the unique tangent cone to $T$ at infinity. In other words, as $R\to\infty$ we have
\begin{enumerate}
    \item [\textnormal{(i)}] $(1/R)_\#T\to \bC$ as currents and as varifolds;
    \item [\textnormal{(ii)}] $(1/R)\spt\,T\to \spt\,\bC$ locally in Hausdorff distance.
\end{enumerate}
\end{theorem}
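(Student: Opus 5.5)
This result is quoted from Rivière \cite{riviere}, so rather than reprove it in full I sketch how the argument goes. The plan is to rescale so that the problem at infinity becomes a tangent cone problem at the origin, then run an epiperimetric or Łojasiewicz-type decay argument.

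The first step is existence of cones and their structure. Set $T_R := (1/R)_\# T$. By monotonicity, $\|T_R\|(B_\rho) \le \Theta \pi \rho^2$ for every $\rho$, and $\del T_R = 0$. Federer--Fleming compactness then gives subsequential limits $\bC$, which are area-minimizing in the limit. The monotonicity identity forces $\Theta_{\bC}(0,\rho) \equiv \Theta$, so $\bC$ is a cone. By the structure fact recalled above, $\bC$ is a finite union of $2$-planes with integer multiplicities, disjoint away from $0$. Minimizing convergence upgrades current convergence to varifold convergence, giving (i) along subsequences. Combining the monotonicity formula with the lower density bound $\Theta \ge 1$ on $\spt\, T_R$ gives Hausdorff convergence of supports, which is (ii) along subsequences.

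The second step, which carries all the difficulty, is uniqueness: the limit must not depend on the subsequence. I would work in logarithmic scale $t = \log R$ and study the excess
\[
e(t) := \Theta - \Theta_T(0, e^t) \ge 0.
\]
This quantity decreases to $0$. The monotonicity formula gives
\[
\int_{B_{2R}\setminus B_R} \frac{|x^\perp|^2}{|x|^4}\, d\|T\| \lesssim e(\log R) - e(\log 2R),
\]
and by Cauchy--Schwarz the flat distance between $T_R \res (B_2 \setminus B_1)$ and $T_{2R} \res (B_2 \setminus B_1)$ is at most $C\,(e(\log R) - e(\log 2R))^{1/2}$. It therefore suffices to prove a summability estimate $\sum_j (e_j - e_{j+1})^{1/2} < \infty$, where $e_j := e(j \log 2)$. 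A polynomial decay $e_j \lesssim j^{-1-\delta}$ would suffice, and so would the reverse (infinity) version of a White-type epiperimetric inequality, $e_{j} - e_{j+1} \ge c\, e_j$, which yields geometric decay. With either, the rescalings form a Cauchy sequence, and the limit cone is unique.

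The main obstacle is exactly this epiperimetric or Łojasiewicz inequality for arbitrary area-minimizing $2$-currents in codimension $m$. The difficulty is branch points: near a multiplicity $n$ plane the current need not be a union of graphs, so White's graphical argument fails. This is precisely what Rivière's work supplies. He uses the lower epiperimetric inequality for $2$-dimensional minimizers, exploiting almost-calibration by the cone on annuli, and he handles branching via a multivalued or Lipschitz approximation on dyadic annuli near each plane $P_i$. Run at infinity (annuli $B_{2^{j+1}}\setminus B_{2^j}$ with $j \to +\infty$), this gives the decay of $e_j$. I would cite that inequality directly rather than reprove it.
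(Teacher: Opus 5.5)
Your proposal follows the same route as the paper. Rivi\`ere's reverse (lower) epiperimetric inequality gives the geometric decay $\Theta - \Theta_T(0,R) \le (R_0/R)^{2\eps}(\Theta - \Theta_T(0,R_0))$, which is your $e_j - e_{j+1} \ge c\, e_j$, and a White-type monotonicity/Cauchy--Schwarz argument then makes the blow-downs (in the paper, the slices $\del(((1/R_i)_\# T)\res B_1)$) Cauchy in the flat norm. The one step you leave implicit, which the paper states, is the compactness/contradiction argument based on your Step 1: it shows that $(1/R)_\# T$ is flat-close in $B_2$ to a sum of $\Theta$ planes for every $R \gg 1$, which is what allows Rivi\`ere's inequality to be applied at all large scales rather than only along a subsequence.
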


\begin{proof}
The uniqueness of $\bC$ follows from the reverse epiperimetric inequality established in \cite{riviere}.  More precisely, \cite{riviere} proves that there is an $\eps(m, \Theta) > 0$ such that whenever there are oriented $2$-planes $P_1, \ldots, P_{\Theta}$ (any two being either coincident or transverse) satisfying
\begin{equation}\label{eqn:unique-1}
\cF\left(T\res B_2 - \sum^\Theta_{i=1}\llbracket P_i\rrbracket \res B_2\right) < \eps
\end{equation}
then it follows that
\[
\pi \Theta - \frac{1}{2} \|\del(T \res B_1)\|(\del B_1) \leq (1-\eps)(\pi \Theta - \|T\|(B_1)).
\]

A straightforward argument by contradiction implies that \eqref{eqn:unique-1} holds with $(1/R)_\# T$ in place of $T$ for all $R \gg 1$. Hence, \cite{riviere} obtains a decay estimate of the form
\[
\Theta - \Theta_T(0, R) \leq \left(\frac{R_0}{R}\right)^{2\eps}(\Theta - \Theta_T(0, R_0)) \quad \text{for all } R \geq R_0
\]
for some $R_0\gg 1$. By an argument similar to that in \cite{white:unique}, this implies that given any sequence $R_i \to \infty$, the currents $\del (((1/R_i)_\# T) \res B_1)$ form a Cauchy sequence in the flat norm, which implies there is a unique tangent cone $\bC$ of $T$ at infinity. From this all the claimed convergence results follow. 
\end{proof}

Next, we lay out the key holomorphicity result due to \cite{micallef} 

\begin{theorem}[Holomorphicity of stable minimal surfaces in $\R^4$, \cite{micallef}]\label{thm:holo}
Let $F : M^2 \to \R^4 \equiv \C^2$ be a smooth, isometric, stable minimal immersion of a connected, oriented $2$-surface without boundary, obeying:
\begin{enumerate}
    \item [\textnormal{(i)}] $F$ is proper away from a discrete set $X\subset\R^4$ (i.e.~$F^{-1}(K)$ is compact for every compact $K\subset\R^4\setminus X$);
    \item [\textnormal{(ii)}] $F(M)$ has quadratic area growth with respect to ambient balls (i.e.~$\Theta := \Theta_{F(M)}(\infty)<\infty$).
\end{enumerate}
Then, we can find a (real) rigid motion $A\in O(4)$ so that $A\circ F$ is holomorphic with respect to the standard complex structure on $\C^2$.
\end{theorem}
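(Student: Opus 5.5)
This is Micallef's theorem. The plan is to reduce it to his argument for complete stable surfaces with quadratic area growth, checking that the discrete non-properness set $X$ does not interfere.

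\textbf{Step 1: the Gauss map and a harmonic function.} Since $M$ is oriented and minimal in $\R^4$, its Gauss map takes values in the Grassmannian of oriented $2$-planes, $G^+(2,4) \cong S^2 \times S^2$. Each factor $S^2$ parametrizes the orthogonal complex structures on $\R^4$ of one orientation class. The two factor maps $g_\pm : M \to S^2$ are (anti)conformal. The surface is holomorphic for some orthogonal complex structure, up to an element of $O(4)$, exactly when one of $g_\pm$ is constant. So the goal is to show that one of the two partial Gauss maps is constant.

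\textbf{Step 2: the stability inequality.} Micallef's key step is to feed sections of the normal bundle built from holomorphic data into the complexified second variation. Let $\bar\partial$ denote the Cauchy--Riemann-type operator on the complexified normal bundle $N^{\C}M$ split into its two isotropic line bundles. Stability then yields
\[
\int_M |\bar\partial s|^2 \geq 0
\]
in a form which, after an integration by parts, shows the following. If $\int |\nabla \varphi|^2$ can be made arbitrarily small, the inequality forces $\bar\partial s = 0$ for a suitable isotropic section. That in turn makes one of $g_\pm$ antiholomorphic-constant.

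\textbf{Step 3: cutoffs via quadratic area growth.} This is where hypotheses (i) and (ii) enter. Use the log cutoff $\varphi_R(x) = \log(R^2/|x|)/\log R$ on $B_{R^2} \setminus B_R$, composed with $F$. Because $\|F(M)\|(B_r) \le \Theta \pi r^2$, we get $\int_M |\nabla \varphi_R|^2 \lesssim \Theta/\log R \to 0$; this is the usual parabolicity argument. Near each point of $X$ we also need cutoffs. Since $X$ is discrete and the area growth is quadratic at every point by monotonicity (the stationary varifold extends across $X$, cf.\ the footnote), the same log cutoff at small scales around each $x \in X$ has Dirichlet energy $\lesssim \Theta_{F(M)}(x)/\log(1/r) \to 0$. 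Properness away from $X$ makes the compositions $\varphi \circ F$ compactly supported on $M$, so they are admissible test functions.

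\textbf{Step 4: conclusion.} Passing to the limit gives $\bar\partial s = 0$ globally, so one partial Gauss map is constant and $F(M)$ is $J'$-holomorphic for a fixed orthogonal complex structure $J'$. Choose $A \in O(4)$ with $A_\# J' = J$; then $A \circ F$ is holomorphic for the standard structure.

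The main obstacle is Step 2, namely producing the right isotropic test sections and the Bochner-type identity. Since this is exactly Micallef's computation, I would cite \cite{micallef} for it and verify only the cutoff modifications in Step 3.
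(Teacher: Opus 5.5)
Your overall strategy (cite Micallef, then check that the log cutoffs survive the punctures at $F^{-1}(X)$ and at infinity) matches the paper's treatment of parabolicity. However, your Step 2 misreads how Micallef's argument works, and this hides a second place where completeness is used. As written, Step 2 is not an argument: $\int_M|\bar\partial s|^2\ge 0$ is vacuous. The bounded normal sections available for free, projections of constant vectors $a\in\C^4$, are not holomorphic. Testing stability with $f$ times such projections gives
$\int_M q(a)f^2\le|a|\int_M|\nabla f|^2$, where $q(a)=-2|F_z|^{-4}\mathrm{Re}\{(F^{1,0}_{zz}\cdot a)(F^{0,1}_{zz}\cdot\bar a)\}$.
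This potential has no sign (only $\sum_i q(e_i)=0$), so inserting cutoffs with vanishing energy yields nothing directly. Micallef first invokes Fischer-Colbrie's theorem to produce positive $u_i$ with $\Delta u_i+q(e_i)u_i=0$ on all of $M$. Setting $w_i=\log u_i$ and using $\sum_i q(e_i)=0$ gives $\frac1{16}\int_M(\sum_i|\nabla w_i|^2)f^2\le\int_M|\nabla f|^2$, which has a nonnegative weight. Only at this point does parabolicity (your cutoffs) force the $w_i$ to be constant, hence $|F^{1,0}_{zz}||F^{0,1}_{zz}|\equiv0$. Holomorphicity of the two quadratic differentials then makes one of them vanish identically. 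Fischer-Colbrie's existence result is proved on complete manifolds by exhausting with geodesic balls, so it must also be re-proved for your incomplete $M$. The paper does this with the exhaustion $D_i=F^{-1}(B_{R_i}\setminus\bigcup_j B_{r_{ij}}(x_j))$. This is easy, but your plan of verifying ``only the cutoff modifications'' does not account for it.

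Second, in Step 3 the inference ``$\|F(M)\|(B_r)\le\Theta\pi r^2$ implies $\int_M|\nabla(\varphi_R\circ F)|^2\lesssim\Theta/\log R$'' skips a step. The left side is an integral over the domain $M$, so by the area formula each $y\in F(M)$ is weighted by $\#F^{-1}(y)$. Hypothesis (ii), with the paper's convention $\|M\|(A)=\haus^2(M\cap A)$, controls only the area of the image. You therefore need a uniform bound $\#F^{-1}(F(x))\le k$ for $\haus^2$-a.e.\ $x$. The paper obtains this from analytic continuation: the set of points having $k$ disjoint neighbourhoods with a common image is open and closed, and the largest such $k$ is finite by properness. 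It combines this with the fact that non-coincident minimal sheets meet in an $\haus^2$-null set. The energy bound then becomes $\lesssim k\Theta/\log R$. If you read $\Theta$ as the density of the multiplicity-counting varifold $F_\#|M|$, this issue disappears, but that is a priori a stronger hypothesis than the one stated.
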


\begin{remark}\label{rem:M-to-V}
By properness $|F(M)|$ defines a stationary integral varifold in $\R^4 \setminus X$, and since $X$ is discrete and $|F(M)|$ has finite mass in any ambient ball, $|F(M)|$ extends as a stationary integral varifold to all of $\R^4$. Therefore by the monotonicity formula we have $\Theta_{F(M)}(x, r) \leq \Theta$ for all $x \in \R^4$ and all $r > 0$.  
\end{remark}

\begin{proof}
As mentioned, \cite{micallef} originally proved Theorem \ref{thm:holo} in the case when $X = \emptyset$.  A minor modification of the proof in \cite{micallef}, essentially outlined in \cite{micallef:note} in the setting of branched immersions, can adapt the result to the case of a discrete singular set (and $M$ which is not a priori complete). There are two necessary facts to verify.  For convenience there is no loss below in replacing $M$ with $M \setminus F^{-1}(X)$, since $F^{-1}(X)$ is discrete (possibly empty or infinite).

First, we require the following result of \cite{fischer-colbrie}, originally proved for complete, non-compact $M$: if for some smooth function $Q$ on $M$ we have
\[
\int_M Q f^2 \leq \int_M |\nabla f|^2 \quad \forall f \in C^1_c(M)
\]
then there is a positive function $u$ on $M$ satisfying $\Delta u + Q u = 0$. In \cite{fischer-colbrie} this is argued by solving a sequence of PDEs on a compact exhaustion $D_i \equiv B_{R_i}(p)\subset M$
of $M$, renormalizing, and then using the Harnack inequality to take a limit to obtain the desired $u$.  It is readily verifiable that when $F: M^2 \to \R^4$ is non-compact and properly immersed away from a discrete singular set $X= \{ x_j \}_{j} \subset \R^4$, then the same argument works with domains of the form $D_i = F^{-1}(B_{R_i} \setminus \cup_j B_{r_{ij}}(x_j))$ for suitable $R_i \to \infty$, $r_{ij} \to 0$.

Second, we require that $M$ is parabolic, i.e.~that any positive superharmonic function on $M$ is constant.  For this we employ the log cutoff trick.  We first note that by Remark \ref{rem:M-to-V} we have the bounds $\haus^2(F(M) \cap B_r(x)) \leq \Theta \pi r^2$ for all $x \in \R^4$ and $r > 0$.  We first claim that there is a number $k$ such that $F^{-1}(F(x)) \leq k$ for $\haus^2$-a.e. $x \in M$.

To see this, consider the set $M_k \subseteq M$ with the property that for every $x \in M_k$, there are $k$ disjoint open subsets $V_1, \ldots, V_k \subset M$ such that $x \in V_1$ and $F(V_1) = F(V_2) = \cdots = F(V_k)$.  By (real) analytic continuation, $M_k$ is both open and closed in $M$, so either $M_k = \emptyset$ or $M_k = M$.  Trivially $M_1 = M$.  Let us fix $k$ to be the largest integer for which $M_k = M$, which is finite by the properness of $F$.

Given $p \in F(M) \setminus X$ and $r > 0$ sufficiently small, we can write $F(M) \cap B_r(p) = M_1 \cup \cdots \cup M_l$ for each $M_i$ the embedding of a small disk and $l < \infty$.  For each $i \neq j$, either $M_i, M_j$ coincide near $p$, or by \cite{hoffman-osserman} (see also \cite{micallef-white}) the set of points at which $M_i, M_j$ intersect tangentially is discrete. On the other hand, by the inverse function theorem, the set of points at which $M_i, M_j$ intersect non-tangentially has Hausdorff dimension $\leq 1$.  Since $\{ x \in F^{-1}(B_r(p)) : \# F^{-1}(F(x)) > k \}$ is contained in the set $F^{-1}( \cup_{i, j} \{ M_i \cap M_j : \text{$M_i, M_j$ do not locally coincide}\})$, we deduce $\{ x \in M : \# F^{-1}(F(x)) > k \}$ has $\haus^2$-measure zero.  This shows $\#F^{-1}(F(x)) = k$ for $\haus^2$-a.e. $x \in M$.

Following e.g.~\cite[Example 3.10(3)]{christos:notes}, to show $M$ is parabolic it suffices to prove there is a family of non-negative functions $\psi_i \in C^{0, 1}_c(M)$ so that $\psi_i \to 1$ uniformly on compact subsets of $M$, and $\int_M |\nabla \psi_i|^2 d\haus^2 \to 0$.  To this end, fix $\eps > 0$, and recalling the enumeration $X = \{ x_j \}_{j}$ set $r_j = \exp(-2^i/\eps)$ for $j\geq 1$ and $r_0 = \exp(1/\eps)$. Define
\[
\phi_j(y) = \left\{ \begin{array}{l l} 0 & |y - x_j| < r_j^2 \\ \frac{\log(|y - x_j|) - \log(r_j^2)}{\log(r_j) - \log(r_j^2)} & r_j^2 \leq |y - x_j| \leq r_j\\ 1 & r_j < |y - x_j| \end{array} \right. \quad \text{for }j\geq 1
\]
and
\[
\phi_0(y) = \left \{ \begin{array}{l l} 1 & |y| < r_0 \\ \frac{ \log(r^2_0)-\log(|y|)}{\log(r_0^2) -\log(r_0)} & r_0 \leq |y| \leq r_0^2 \\ 0 & |y|>r_0^2 \end{array} \right. .
\]
Set $\phi(y) = \min_{j \geq 0} \phi_j(y)$, which by our discreteness hypothesis is actually only a minimum over finitely many $j$ locally about each point. Thus $\phi \in C^{0, 1}_c(\R^4)$ and clearly $\phi \to 1$ uniformly on compact sets of $\R^4 \setminus X$ as $\eps \to 0$.

Define $\psi = \phi \circ F \in C^{0, 1}_c(M)$.  Then $\psi \to 1$ in $C^0_{\text{loc}}(M)$ as $\eps \to 0$, and we compute
\begin{align*}
\int_M |\nabla \psi|^2 d\haus^2 
&\leq k \int_{F(M) \cap B_{r_0^2} \setminus B_{r_0}} \frac{1}{|y|^2 \log^2(r_0)} d\haus^2 \\
&\quad \quad\quad + k \sum_{j\geq 1} \int_{F(M) \cap B_{r_j}(x_j) \setminus B_{r_j^2}(x_j)} \frac{1}{|y - x_j|^2 \log^2(r_j)} d\haus^2 \\
&\leq \frac{16 \Theta k\pi}{\log(r_0)} + \sum_{j\geq 1} \frac{16 \Theta k \pi}{-\log(r_j)} \\
&\leq 32 \Theta k \pi \eps,
\end{align*}
having used the bound $\# F^{-1}(F(x)) \leq k$ in the first inequality, and the bounds $\haus^2(F(M) \cap B_r(x)) \leq \Theta \pi r^2$ (over suitable dyadic scales) in the second.

\vspace{3mm}

The rest of the proof of \cite{micallef} carries over without change (see also \cite{christos:notes}), however we sketch the general argument.

The starting point, since $M$ is orientable, is to endow both $M$ and the (pullback) normal bundle $NM \subset F^*(T\R^4) \cong M \times \R$ with complex structures $J^\top$ and $J^\perp$ (respectively) by rotation by $90^\circ$. Define for each $p \in M$ the mapping $J_p(v) = J^\top(\pi_{T_pM}(v)) + J^\perp(\pi_{N_pM}(v))$, which gives an orthogonal complex structure on $T_{F(p)}\R^4$.  The end goal is demonstrate that $J_p$ is independent of $p$.

Write $(z, \bar z)$ for (local) complex coordinates on $M$.  Split the complexified tangent and normal bundles via the complex structures $J^\top$, $J^\perp$ as $T_\bC M = T^{1, 0}M \oplus T^{0, 1}M$, $N_\bC M = N^{1, 0}M \oplus N^{0, 1}M$.  Write $F_{zz}^{1, 0} = \pi_{N^{1, 0}M}(F_{zz})$, $F_{zz}^{0, 1} = \pi_{N^{0, 1}M}(F_{zz})$.

A pointwise computation shows that if either $F_{zz}^{1, 0}$ or $F_{zz}^{0, 1}$ identically vanishes on $M$, then after possibly replacing $J^\perp$ with $-J^\perp$, the mapping $J_p$ will be independent of $p$.  To this end, \cite{micallef} uses the second variation formula to get the inquality
\[
\int_M q(a) f^2 \leq |a| \int_M |\nabla f|^2 \qquad \forall f \in C^1_c(M),
\]
for any $a \in \C^4$ fixed, where $q(a) = \frac{-2}{|F_z|^4} \mathrm{Re}\left\{ (F_{zz}^{1, 0}\cdot a)(F_{zz}^{0, 1}\cdot \overline{a}) \right\}$.

Taking $a \in \{ e_i \}_{i = 1}^4$ among an orthonormal basis of $\C^4$, the (modified) argument of \cite{fischer-colbrie} implies there are smooth, positive $u_i$ on $M$ solving $\Delta u_i + q(e_i) u_i = 0$.  \cite{micallef} then observes that, since $\sum_{i=1}^4 q(e_i) = 0$, if one sets $w_i = \log(u_i)$ we obtain
\[
\frac{1}{16} \int_M \Big(\sum_{i=1}^4 |\nabla w_i|^2\Big) f^2 \leq \int_M |\nabla f|^2 \qquad \forall f \in C^1_c(M).
\]
Applying the modified \cite{fischer-colbrie} again we deduce there is a smooth, positive $f$ on $M$ satisfying $\Delta f + \frac{1}{16}(\sum_{i=1}^4 |\nabla w_i|^2) f = 0$, and in particular $\Delta f \leq 0$. By parabolicity of $M$, $f$ must be constant, and therefore all the $w_i$ are constant, and hence $|F_{zz}^{1, 0}| |F_{zz}^{0, 1}| \equiv 0$.  Since both $F_{zz}^{1, 0} dz^2$, $F_{zz}^{0, 1} dz^2$ are holomorphic differentials, it follows that one of the two factors must identically vanish on $M$.
\end{proof}

We now prove the main theorems.
\begin{proof}[Proof of Theorem \ref{thm:M}]
We first use Theorem \ref{thm:holo} to get that, possibly after a suitable rotation and reflection, $F : M^2 \to \C^2$ is holomorphic, so that $F(M)$ is a closed, complex-analytic subset of $\C^2 \setminus X$.  Since $X$ is discrete, standard removable singularity theorems (e.g.~\cite[Theorem 8.7]{demailly}) imply $\overline{F(M)}$ is a complex-analytic subset of $\C^2$.  Outside of a discrete singular set $\overline{F(M)}$ is properly embedded, so henceforth let us identify $M$ with $\reg\, F(M)$.

The current $T = \llbracket M\rrbracket$ is a closed, positive, integral $(1, 1)$-current in $\C^2$ (c.f.~Remark \ref{rem:M-to-V}), and so in particular $T$ is area-minimizing.  By Theorem \ref{thm:unique}, $T$ has a unique tangent cone $\bC = n_1 \llbracket P_1\rrbracket + \cdots + n_l\llbracket P_l\rrbracket$ at infinity (attained as currents, varifolds, and in Hausdorff distance for the supports), where without loss of generality we can assume $P_i \cap P_j = \{ 0 \}$ for all $i\neq j$.  From holomorphicity of $T$ each $P_i$ is a complex subspace of $\C^2$ (see also \cite{morgan:planes}).

Embed $\C^2$ into $\CP^2$, and write $\CP^1$ for the sphere at infinity.  We can think of $M$ as an analytic subset of $\CP^2 \setminus \CP^1$, which by the previous paragraph satisfies
\[
\overline{M} \cap \CP^1 = (\cup_i P_i) \cap \CP^1 
\]
is a discrete set.  So in fact $\overline{M} \cap \C^2$ is an analytic subset of $\CP^2$ minus a finite collection of points, and therefore again using the removable singularity results $\overline{M}$ is an analytic subset of $\CP^2$.  From Chow's theorem (\cite[Theorem 8.10]{demailly}) it follows that $\overline{M}$ is algebraic in $\CP^2$, and thus there is a homogeneous polynomial $P(z, w, y)$ on $\C^3$ so that $\overline{M} = \{ [z : w : y] : P(z, w, y) = 0 \} \subset \CP^2$.  Setting $p(z, w) = P(z, w, 1)$, we get $\overline{M} \cap \C^2 = \{ (z, w) : p(z, w) = 0 \}$.

Since $M$ is connected, there is no loss in assuming $P(z, w, y)$ is irreducible.  A basic fact about the cohomology of algebraic curves in $\CP^2$ (e.g. \cite[Theorem 5.22]{mumford1}) implies 
\[
\deg(P) \pi = \vol_{FS}(M) = \int_M \omega_{FS}
\]
where $\vol_{FS}$ denotes the volume of $M \subset \CP^2$ with respect to the Fubini-Study metric, and $\omega_{FS}$ denotes the Fubini-Study Kahler form on $\C^2$, i.e.~writing $(z_1, z_2) = (z, w)$, then
\[
\omega_{FS} = \frac{\sqrt{-1}}{2} \sum_{i, j = 1}^2 \frac{(1+|z_1|^2 + |z_2|^2) \delta_{i \bar j} - \bar{z_i} z_j}{(1+|z_1|^2 + |z_2|^2)^2} dz^i \wedge d \bar z^j
\]
On the other hand, by direct computation we have
\[
\int_{\bC} \omega_{FS} = \sum_{i=1}^l n_i \int_{P_i} \omega_{FS} = \Theta \pi
\]
By Theorem \ref{thm:unique}, we can choose a sequence of radii $R_i \to \infty$, $3$-currents $E_i$ supported in $\overline{B_{R_i}}$, and $2$-currents $S_i$ supported in $\del B_{R_i}$, so that 
\[
(\llbracket M\rrbracket - \bC) \res B_{R_i} = \del E_i + S_i, \quad R_i^{-2} \|S_i\|(\del B_{R_i}) \to 0.
\]
Now since $\omega_{FS}$ is closed, and noting that $|\omega_{FS}| \leq c R^{-2}$ on $\del B_R$ for a uniform constant $c$, we have
\[
\left| \int_{M \cap B_{R_i}} \omega_{FS} - \int_{\bC \cap B_{R_i}} \omega_{FS} \right| \leq c R_i^{-2} \|S_i\|(\del B_{R_i}) \to 0
\]
as $i \to \infty$.  Since $\omega_{FS}$ is the volume form (and hence $\geq 0$) on both $M$ and $\bC$, and both integrals are finite, we deduce that $\int_M \omega_{FS} = \int_\bC \omega_{FS}$, and hence $\deg(P) = \Theta$.

(A more hands-on analysis would show $p(z, w) = (a_1 z + b_1 w)^{n_1} \cdots (a_l z + b_l w)^{n_l} + q(z, w)$ where $P_i = \{ (z, w) : a_i z + b_i w = 0 \}$ and $\deg(q) \leq d-1$.)

Finally, the singular set and genus bound follows from the genus-degree formula (e.g.~\cite[Chapter IV, Exercise 1.8]{hartshorne}).
\end{proof}

\begin{proof}[Proof of Theorem \ref{thm:main}]
By \cite{chang} (see also \cite{delellis-spadaro-spolaor3}
), $T$ is regular away from a discrete singular set $\sing\,T$.  Write $M_1, \ldots, M_k$ for the connected components of $\reg\, T$, so that if we set $T_i = T \res M_i$ then (from the constancy theorem and integrality of $T$) $T_i = m_i\llbracket M_i\rrbracket$ for some positive integer $m_i$ and
\[
T = T_1 + \cdots + T_k, \quad \|T\| = \|T_1\| + \cdots + \|T_k\|.
\]
In particular, we have $m_1 \Theta_{M_1}(\infty) + \cdots + m_k \Theta_{M_k}(\infty) = \Theta_T(\infty)$.  Note that since each $M_i$ is itself stationary, monotonicity implies $\Theta_{M_i}(\infty) \geq 1$, and so $k < \infty$.

Each $M_i$ is stable, connected, oriented, embedded, and complete away from the discrete set $\sing\, M$, and has $\Theta_{M_i}(\infty) < \infty$.  We can therefore apply Theorem \ref{thm:M} to deduce
$$M_i = A_i \{ (z, w)\in \C^2 : p_i(z, w) = 0 \}$$
for $A_i \in O(4)$, and $p_i(z, w)$ a degree $d_i \equiv \Theta_{M_i}(\infty)$ polynomial.

Let $\bC = \llbracket P_1\rrbracket + \cdots + \llbracket P_\Theta\rrbracket$ be the (unique) tangent cone of $T$ at infinity.  By \cite{morgan:planes}, all the $P_i$ must be holomorphic and positively oriented with respect to the same orthogonal complex structure, which after a rigid motion in $\R^4$ we can assume is $J$.  Given any $T_i$, there is some subset $I_i \subseteq \{ 1, \ldots, \Theta\}$ so that the tangent cone at infinity for $T_i$ is $\bC_i = \sum_{j \in I_i} \llbracket P_j\rrbracket$.

Each $P_j$ ($j \in I_i$) must be holomorphic with respect to both orthogonal complex structures $J$ and $({A_i})_\# J$.  If the $\{ P_j : j \in I_i \}$ span $\R^4$, then the complex structure is uniquely determined, and we must have $J = ({A_i})_\# J$, hence $A_i \in U(2)$.  If all the $\{ P_j : j \in I_i \}$ coincide, then the complex structure is uniquely determined up to the choice of (90$^\circ$) rotation in $P_j^\perp$, and so we have either $J = ({A_i})_\# J$ or $({R_i})_\# J = ({A_i})_\# J$ where $R_i$ is any choice of real reflection in $P_j^\perp$, hence either $A_i \in U(2)$ or $R_i^{-1} A_i \in U(2)$.  In this last case, since $P_j$ is complex, we can write $R_i = B_i R B_i^{-1}$ where $B_i \in U(2)$ and $R(z, w) = (z, \overline{w})$.

Therefore, after changing each polynomial $p_i(z, w)$ by a complex rigid motion, we can assume that either
\[
T_i = m_i \llbracket \{ p_i(z, w) = 0\} \rrbracket, \quad \text{ or } \quad T_i = m_i {(B_i R)}_\# \llbracket \{ p_i(z, w) = 0 \} \rrbracket .
\]
In the second case above we must have that $\bC_i$ is supported in the plane $B_i\{ w = 0 \}$, and so $p_i(z, w) = w^{d_i} + \text{(lower order terms in $z, w$)}$ and $\bC_i = m_i d_i (B_i)_\# \llbracket \{ w = 0 \} \rrbracket$. Since $\bC$ must be positively oriented with respect to the standard complex structure, we can keep $m_i > 0$ if we endow each current $\llbracket \{ p_i(z, w) = 0 \} \rrbracket$ with the positive orientation.  This gives our decomposition \eqref{eqn:main-concl1}.  The bound \eqref{eqn:main-concl3} on genus follows directly from Theorem \ref{thm:M}.

\vspace{3mm}

We now prove the bound \eqref{eqn:main-concl2} on the singular set. Any singular point of $T$ can arise as a point in some $\sing\, T_i$ or as a point in some intersection $\spt\, T_i \cap \spt\, T_j$ (or even just $\reg\, T_i \cap \reg\, T_j$).  We break into three cases:
\begin{enumerate}
\item both $T_i, T_j$ are holomorphic with respect to the standard complex structure $J$ (so $i, j \leq \ell$); 
\item  $T_i$ is $J_i$-holomorphic, and $T_j$ is $J_i$-holomorphic for $J_i, J_j$ both orientation-reversing orthogonal complex structures (so, $i, j \geq \ell+1$); 
\item $T_i$ is $J$-holomorphic and $T_j$ is $J_j$-holomorphic for some orientation reversing-complex structure $J_j$ (so $i \leq \ell < j$).
\end{enumerate}

\emph{Case (1).} In this case we can simply use Bezout to bound
\begin{equation}\label{eqn:main-1}
\# \spt\, T_i \cap \spt\, T_j = \# \{ (z, w) : p_i(z, w) = p_j(z, w) = 0 \} \leq d_i d_j.
\end{equation}

\emph{Case (2).} We will show that the bound \eqref{eqn:main-1} holds in Case (2) also.  The naive bound using ``real'' Bezout in this case will be $d_i^2 d_j^2$, however we can use the minimality of $T_i + T_j$ to deduce (after a suitable perturbation) that $\spt\, T_i \cap \spt\, T_j = \reg\, T_i \cap \reg\, T_j$ transversely and with positive intersection number, which will allow us to apply ``standard'' Bezout (we stress that area-minimality of the sum of crucial for this).

Of course if $J_1 = J_2$ then up to (real) rigid motion both $T_i$, $T_j$ are $J$-holomorphic, and \eqref{eqn:main-1} again follows from Bezout, so we can without loss of generality assume $J_1 \neq J_2$.

For ease of notation, in this case we will simply write $i = 1$, $j = 2$, and let $k \in \{ 1, 2 \}$.  Our setup is
\begin{gather*}
T_k = m_k (B_k R)_\# \llbracket \{ p_k(z, w) = 0 \} \rrbracket, \quad  B_k \in U(2) , \\
p_k(z, w) = w^{d_k} + \text{(lower order terms in $z, w$)}.
\end{gather*}
So $T_1$ is positively $J_1 \equiv (BR)_\#J$ holomorphic, and $T_2$ is positively $J_2 \equiv (B_2 R)_\# J$ holomorphic.  If we write $P_k = B_k \{ w = 0 \}$ then the tangent cone of $T_k$ at infinity is $m_k \llbracket P_k \rrbracket$, and (since $J_1 \neq J_2$) we have that $P_1 \neq P_2$.

Fix $x \in \spt\, T_1 \cap \spt\, T_2$, and let $\bC_1', \bC_2'$ be the tangent cones of $T_1, T_2$ at $x$, so that $\bC_1' + \bC_2'$ is the tangent cone of $T_1 + T_2$ at $x$.  We claim there are transverse planes $P_1', P_2'$, both positively $J'$-holomorphic for some orientation-preserving orthogonal complex structure, so that
\begin{equation}\label{eqn:main-2}
\bC_1' = m_1' \llbracket P_1' \rrbracket, \quad \bC_2' = m_2' \llbracket P_2' \rrbracket
\end{equation}
for some integers $m_1', m_2' > 0$.  In particular, $P_1', P_2'$ will have positive intersection number.  Also note \eqref{eqn:main-2} implies $T_1$ or $T_2$ can be singular at $x$ only if $x$ is a branch point.

We prove our claim.  First note that since $J_1 \neq J_2$ and both are orientation-reversing, there is no plane which is simultaneously $J_1$- and $J_2$-holomorphic.  So $\bC_1'$ is a sum of oriented planes with multiplicity, each being positive $J_1$-holmorphic but not $J_2$-holomorphic, and $\bC_2'$ is a sum of planes with multiplicity, each being positive $J_2$-holomorphic but not $J_1$-holomorphic.  On the other hand, by \cite{morgan:planes} every plane in both $\bC_1'$ and $\bC_2'$ must be positively $J'$-holomorphic with respect to some orthogonal complex structure $J'$.

We cannot have $J' = J_1$ or $J' = J_2$.  But then a given plane $P$ can only be positively holomorphic for both $J'$ and $J_i$ if $J'$ is obtained from $J_i$ by a real reflection in $P^\perp$, and in this case $P$ will be the \emph{unique} plane which is positively holomorphic with respect to both $J'$ and $J_i$.  Therefore $J'$ will be orientation-preserving, and $\bC_1'$, $\bC_2'$ will be a single plane $P_1'$, $P_2'$ (respectively) each with some multiplicity, as in \eqref{eqn:main-2}.

Since each $B_k R B_k^{-1} \spt\, T_k \equiv B_k \{ p_k(z, w) = 0 \}$ extends as an algebraic variety in $\CP^2$, we can find punctured Riemann surfaces $\Sigma_1, \Sigma_2$ (with compact closure), and simple branched immersions $v_k : \Sigma_k \to \C^2$ parameterizing $B_k \{ p_k(z, w) = 0 \}$ (with positive orientation).  Define $u_k = B_k R B_k^{-1} v_k$ so that $u_k(\Sigma_k) = \spt\, T_k$, and agrees with the orientation of $T_k$ on its regular part.

Choose a large ball $B_r$ so that for each $k = 1, 2$, we have $\sing\, T_k \subset B_{r/2}$ and $\spt\, T_k \cap \del B_\rho$ is $\eps \rho$-Hausdorff close to $P_k \cap \del B_\rho$ for all $\rho \geq r/2$.  Choose any $0 < \eps < \frac{1}{100} d_H(P_1 \cap B_1, P_2 \cap B_1)$ (where $d_H$ is the Hausdorff distance), so that $\spt\, T_1$, $\spt\, T_2$ are disjoint outside $B_{r/2}$.  Let $\pi_k$ be the orthogonal projection onto $P_k$, and define
\[
\tilde \Sigma_k = v_k^{-1}(\{ |\pi_k(y)| < 2r \}) \equiv u_k^{-1}(\{ |\pi_k(y)| < 2r \}),
\]
which we can assume (after changing $r$ if necessary) is a compact Riemann surface with smooth boundary.  Let $\eta : \R \to \R$ be a smooth non-increasing function which is $\equiv 1$ on $(-\infty, 1.1r]$ and $\equiv 0$ on $[1.9r, \infty)$, and define the truncated map
\[
\tilde u_k(x) = \eta(|(\pi_k\circ u_k)(x)|) u_k(x) + (1-\eta(|(\pi_k \circ u_k)(x)|)) (\pi_k \circ u_k)(x),
\]
similarly we can let $\tilde v_k(x) \equiv B_k R B_k^{-1} \tilde u_k(x)$ (recall that $B_k R B_k^{-1}$ fixes the plane $P_k$).  Then each $\tilde u_k, \tilde v_k : \tilde \Sigma_k \to B_r \subset \C^2$, and
\begin{gather}
\tilde u_k(\del \tilde \Sigma_k) = \tilde v_k(\del \tilde \Sigma_k) = \del B_{2r} \cap P_k , \label{eqn:main-2.5} \\
\tilde v_k(\tilde \Sigma_k) \cap B_r = v_k(\tilde \Sigma_k) \cap B_r, \quad \tilde u_k(\tilde \Sigma_k) \cap B_r = u_k(\tilde \Sigma_k) \cap B_r. \nonumber
\end{gather}
In particular, by our choice of $r$, we have $\spt\, T_1 \cap \spt\, T_2 = \tilde u_1(\tilde \Sigma_k) \cap \tilde u_2(\tilde \Sigma_2)$.

If $x \in \spt\, T_1 \cap \spt\, T_2$ is a branch point for $T_1$ (and hence $\tilde u_1$), then since the tangent planes of $\reg\, T_1$ vary continuously near $x$ (and approach $P_1'$ as they approach $x$), we can perturb $\tilde u_1$ nearby $x$ so that $\tilde u_1(\tilde \Sigma_1)$, $\tilde u_2(\tilde \Sigma_2)$ only intersect in their regular parts near $x$, the number of intersections only increases, and any new points of intersections will remain positive.  For example, if $v$ is any vector not in $P_1' \cup P_2'$, then we can consider a diffeomorphism like $\phi(y) = y + \eps v \xi(|y - x|)$ for $\xi$ a suitable bump function, and then take $\phi \circ \tilde u_1$ to be our perturbation.  Likewise the same argument would work if $x$ is a branch point of $T_2$.

Since the the collection of branch points is finite and contained in $B_{r/2}$, we can thus homotope $\tilde u_1$, $\tilde u_2$ a finite number of times as described above, keeping each $\tilde u_k$ fixed outside $B_r$, so that at any intersection point the maps $\tilde u_k$ are smooth, transverse immersions with positive intersection, and moreover $\# \spt\, T_1 \cap \spt\, T_2 \leq \# \tilde u_1(\tilde \Sigma_1) \cap \tilde u_2(\tilde \Sigma_2)$.

By intersection theory (e.g. \cite[Section 7]{micallef-white}) we obtain
\begin{equation}\label{eqn:main-3}
\# \spt\, T_1 \cap \spt\, T_2 \leq [\tilde u_1(\tilde \Sigma_1) ] \cdot [\tilde u_2(\tilde \Sigma_2)]
\end{equation}
where the right-hand side is a homotopy invariant for each $\tilde u_k$, among maps $\tilde\Sigma_k \to B_r$ which fix the boundary $\tilde u_k(\del \tilde \Sigma_k)$.  Using \eqref{eqn:main-2.5}, we can homotope $\tilde u_k$ to $\tilde v_k$ by e.g. linear interpolation
\[
(x, t) \mapsto (1-t) \tilde u_k(x) + t \tilde v_k(x),
\]
while will leave the boundary unchanged, and thereby deduce that the right-hand side of \eqref{eqn:main-3} also equals $[\tilde v_1(\tilde \Sigma_1)] \cdot [\tilde v_2(\tilde \Sigma_2)]$.  But now
\[
\tilde v_1(\tilde \Sigma_1) \cap \tilde v_2(\tilde \Sigma_2) = v_1(\Sigma_1) \cap v_2(\Sigma_2) = B_1 \{ p_1(z, w) = 0 \} \cap B_2 \{ p_2(z, w) = 0 \}
\]
and since each $v_k$ is a simple branched immersion parameterizing the algebraic variety $B_k\{ p_k(z, w) = 0 \}$, we can use Bezout to get
\[
[\tilde v_1(\tilde \Sigma_1) ] \cdot [\tilde v_2(\tilde \Sigma_2)] = d_1 d_2.
\] 
This completes Case (2).

\emph{Case (3).}  As before, for ease of notation we will simply write $i = 1$ and $j = 2$ in the below. Up to rigid motion, our setup becomes
\[
T_1 = m_1 \llbracket \{ p_1(z,w)=0\}\rrbracket;\quad T_2 = m_2 R_\#\llbracket \{p_2(z,w)=0\}\rrbracket;
\]
\[
\bC_2 = m_2 d_2 R_\#\llbracket \{w=0\}\rrbracket = m_2 d_2 \llbracket \{w=0\}\rrbracket, 
\]
where $\bC_2$ is the tangent cone at infinity of $T_2$.  So, $T_1$ is $J$-holomorphic and $T_2$ is $\overline{J}$-holomorphic where we write $\overline{J} = R_\# J$.

Unlike Cases (1), (2), there doesn't (as far as we can tell) need to be any positivity of intersection between $T_1, T_2$, but instead there is a different kind of a rigidity of the intersection set.

Fix any $x \in \spt\, T_1 \cap \spt\, T_2$, and let $\bC_1', \bC_2'$ be the tangent cones of $T_1$, $T_2$ at $x$ (respectively), so $\bC_1' + \bC_2'$ is the tangent cone of $T_1 + T_2$ at $x$.  We claim at least one of $\bC_1'$, $\bC_2'$ must be supported in $\{ w = 0 \}$.  Otherwise, suppose $\bC_1'$ contains an oriented plane $P_1'$ (with multiplicity), and $\bC_2'$ contained an oriented plane $P_2'$ (with multiplicity), so that neither $P_1'$ nor $P_2'$ coincides with $\{ w = 0 \}$.

From \cite{morgan:planes}, both $P_1'$ and $P_2'$ must be positively $J'$-holomorphic for some orthogonal complex structure $J'$.  Since $\{ w = 0 \}$ is the only plane which is positively holomorphic with respect to both $J$ and $\overline{J}$, we know $J' \neq J$ and $J' \neq \overline{J}$.  Therefore, we must have that $J'$ is obtained from $J$ by a real reflection about $P_1'$, and obtained from $\overline{J}$ from a real reflection about $P_2'$.  But $J$ is orientation-preserving while $\overline{J}$ is orientation-reversing, which means $J'$ must be both orientation-preserving and orientation-reversing, a contradiction.

From our claim we get
\begin{align*}
    \spt\,T_1\cap \spt\,T_2 & \subset \{(z,w):p_1(z,w) = (\del_zp_1)(z,w)=0\}\\
    & \qquad \cup\{(z,\overline{w}):p_2(z,w) = (\del_zp_2)(z,w) = 0\}, 
\end{align*}
and hence by Bezout we get
\[
\# \spt\, T_1 \cap \spt\, T_2 \leq d_1(d_1 - 1) + d_2(d_2 - 1).
\]
This completes Case (3).

\vspace{3mm}

Combining Cases (1), (2), (3), with the bound \eqref{eqn:main-concl3} we obtain
\begin{align*}
\# \sing\, T 
&\leq \frac{1}{2} \sum_{i, j \leq \ell, i\neq j} d_i d_j + \frac{1}{2} \sum_{i, j \geq \ell+1, i \neq j} d_i d_j + \frac{1}{2} \sum_i (d_i - 1)(d_i - 2) \\
&\quad\quad + \sum_{i \leq \ell < j} d_i(d_i + 1) + d_j(d_j+1) \\
&\leq \frac{1}{2} \sum_{i, j \leq \ell} d_i d_j + \frac{1}{2} \sum_{i, j \geq \ell+1} d_i d_j + (k-\ell) \sum_{i \leq \ell} d_i^2 + \ell \sum_{j \geq \ell+1} d_j^2 \\
&\leq k \sum_i d_i^2 \leq \Theta^3. \qedhere
\end{align*}

\end{proof}

\begin{proof}[Proof of Corollary \ref{cor:main}]
If $T$ is not already conical, then by the monotonicity formula $\Theta_T(x) < \Theta_T(\infty) = 2$ for all $x$, and hence $\Theta_T(x) = 1$ for every $x \in \spt\, T$.  So unless $T$ is a multiplicity two affine plane, by Allard's theorem $T$ is entirely regular and multiplicity one. The result then follows from Theorem \ref{thm:main}.
\end{proof}

\bibliography{refs}
\bibliographystyle{amsplain}

\end{document}